\documentclass[12pt, a4paper]{article}
\usepackage{mathtools,amsthm,amsmath,amsfonts,amssymb,tikz-cd,enumitem, graphicx,mathrsfs,bbm,stmaryrd,xcolor}

\tikzcdset{scale cd/.style={every label/.append style={scale=#1},
    cells={nodes={scale=#1}}}}
\usepackage{authblk}
\usepackage{url}
\usepackage{esvect}
\makeatletter
\newcommand{\address}[1]{\gdef\@address{#1}}
\newcommand{\email}[1]{\gdef\@email{\url{#1}}}
\newcommand{\@endstuff}{\par\vspace{\baselineskip}\noindent\small
\begin{tabular}{@{}l}\@address\\\textit{E-mail address:} \@email\end{tabular}}
\AtEndDocument{\@endstuff}
\makeatother
\usepackage{setspace}
\usepackage[utf8]{inputenc}
\usepackage{CJKutf8}
\counterwithin{equation}{section}
\usepackage[pdfencoding=auto,psdextra,colorlinks=true,linkcolor=blue,citecolor=blue,urlcolor = orange]{hyperref}
\usepackage[margin = 1.2in]{geometry}
\usepackage{kantlipsum}
\allowdisplaybreaks
\newtheorem{theorem}{Theorem}[section]

\newtheorem{remark}[theorem]{Remark}
\newtheorem{proposition}[theorem]{Proposition}

\newtheorem{assumption}[theorem]{Assumption}
\newcommand{\citep}[1]{\cite{#1}}

\newcommand{\professor}{\text{Prof.\ }\hspace{-0.03125mm}}
\newcommand{\doctor}{\text{Dr.\ }\hspace{-0.03125mm}}
\newcommand{\edmp}{\text{\normalfont End}}

\begin{document}
\title{\textbf{A Serre type vanishing property of the twisted primitive cohomology}}
\author{Hao Zhuang}
\address{Beijing International Center for Mathematical Research, Peking University, Beijing, China}
\email{hzhuang@pku.edu.cn}
\date{\today}
\maketitle
\begin{abstract}
We prove a Serre type vanishing property for the twisted primitive cohomology of a symplectic manifold. It is based on Tseng and Zhou's vanishing property under the symplectic flatness. These vanishing properties emphasizes the necessity of the symplectic flatness when generalizing certain results from the sheaf cohomology in complex geometry to the primitive cohomology in symplectic geometry. 
\end{abstract} 
\tableofcontents
\section{Introduction}
This work is motivated by Serre's vanishing theorem \cite[Proposition 5.2.7]{huybrechts2005complex} in complex geometry. Suppose that $M$ is a compact K\"ahler manifold, $E$ is a holomorphic vector bundle on $M$, and $L$ is a positive (complex) line bundle on $M$. Then, Serre's vanishing theorem says that when $n$ is sufficiently large, the $q$-th ($q>0$) cohomology $H^q(M, E\otimes L^n)$ of the sheaf of sections of $E\otimes L^n$ is equal to $0$. Here, $L^n$ is the tensor product of $n$ copies of $L$. 

We expect that in certain cohomology theory that shares common features (e.g., the $(p,q)$-diamond) with the sheaf cohomology or with the Dolbeault cohomology, there may exist such a Serre type vanishing property. Several candidates are the primitive cohomology and the $p$-filtered cohomology \cite{tanaka_tseng_2018, tty3rd, tty1st, tty2nd} for symplectic manifolds, and the generalized Dolbeault cohomology \cite{generalized_dolbeault_cohomology} for almost complex manifolds. 

In this paper, we obtain a Serre type vanishing property for the primitive cohomology twisted by a symplectically flat vector bundle. The background information on the symplectic flatness and the associated mapping cone covariant derivative is in \cite{tseng_and_zhou_symplectic_flat_connection_and_twisted_primitive2022, tseng_zhou_symplectic_flat_functional_characteristic_classes2022,  tseng_and_zhou_2025mapping_yang_mills, transgression_primitive_2025}. Throughout the paper, we use the following assumptions: 
\begin{assumption}\normalfont
    We let $(M,\omega)$ be a compact symplectic manifold. All vector bundles are real and smooth. For any vector bundle $E$ on $M$, we let $\Gamma(E)$ be the space of smooth sections of $E$, and $\Omega^i(M, E)$ be the space of smooth $E$-valued $i$-forms on $M$. 
\end{assumption}

Let $E$ be a vector bundle on $M$ admitting a symplectically flat connection 
\begin{align}
    \nabla^E: \Gamma(E)\to\Omega^1(M,E). 
\end{align}
Following \cite[Definition 1.1]{tseng_and_zhou_symplectic_flat_connection_and_twisted_primitive2022}, this means there exists $\Phi^E\in\Gamma(\edmp(E))$ such that 
\begin{align}
    (\nabla^E)^2 + \omega\wedge\Phi^E =\ & 0, \label{the first flatness}\\
    \Phi^E\nabla^E =\ & \nabla^E\Phi^E.  \label{the second flatness}
\end{align}
By \cite[Theorem 4.11]{tseng_and_zhou_symplectic_flat_connection_and_twisted_primitive2022}, the associated mapping cone covariant derivative 
\begin{align}
    \mathbb{A}^E: \Omega^i(M,E)\oplus\Omega^{i-1}(M,E)&\to \Omega^{i+1}(M,E)\oplus\Omega^{i}(M,E)   \nonumber \\
       (\alpha, \beta) &\mapsto (\nabla^E\alpha+\omega\wedge\beta, \Phi^E\alpha -\nabla^E\beta)
\end{align}
gives a cochain complex ($0\leqslant i\leqslant \dim M + 1$), whose cohomology is the primitive cohomology of $(M,\omega)$ twisted by $(E, \nabla^E, \Phi^E)$. 

Next, we let $(L, \nabla^L, \Phi^L)$ be a symplectically flat line bundle over $M$. Then, we let $L^n$ be the tensor product of $n$ copies of $L$, $\nabla^{E\otimes L^n}$ be the tensor product connection on $E\otimes L^n$ induced by $\nabla^E$ and $\nabla^L$, and $\Phi^{E\otimes L^n}$ be the derivation on $\Gamma(E\otimes L^n)$ induced by $\Phi^E$ and $\Phi^L$. They form the mapping cone covariant derivative
\begin{align}\label{tensor product cohomology}
\begin{split}
      \mathbb{A}^{E\otimes L^n}: \Omega^i(M,E\otimes L^n)\oplus\Omega^{i-1}(M,E\otimes L^n)&\to\Omega^{i+1}(M,E\otimes L^n)\oplus\Omega^{i}(M,E\otimes L^n)  \\
      (\alpha, \beta) &\mapsto (\nabla^{E\otimes L^n}\alpha+\omega\wedge\beta, \Phi^{E\otimes L^n}\alpha -\nabla^{E\otimes L^n}\beta).
      \end{split}
    \end{align}
Since $(E\otimes L^n, \nabla^{E\otimes L^n}, \Phi^{E\otimes L^n})$ is symplectically flat (see Proposition \ref{prerequisite}), (\ref{tensor product cohomology}) is a cochain complex $(0\leqslant i\leqslant \dim M + 1)$.

Finally, our main result, the vanishing property is as follows.
\begin{theorem}\label{main result}
   If $\Phi^L$ is everywhere non-zero, then for all sufficiently large $n$, the cohomology of the cochain complex {\normalfont(\ref{tensor product cohomology})} vanishes. 
\end{theorem}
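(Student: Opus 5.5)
The plan is to reduce Theorem \ref{main result} to the invertible case: when the endomorphism part $\Phi$ of a symplectically flat triple is invertible, the mapping cone complex is contractible. This is the Tseng--Zhou type vanishing under symplectic flatness. Write $\nabla=\nabla^{E\otimes L^n}$, $\Phi=\Phi^{E\otimes L^n}$ and $\mathbb{A}=\mathbb{A}^{E\otimes L^n}$. By Proposition \ref{prerequisite}, the triple $(E\otimes L^n,\nabla,\Phi)$ is symplectically flat, so in particular $\Phi\nabla=\nabla\Phi$. Suppose $\Phi$ is invertible at every point, so that $\Phi^{-1}\in\Gamma(\edmp(E\otimes L^n))$ is smooth. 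Then $\Phi^{-1}\nabla=\nabla\Phi^{-1}$, which follows by multiplying $\Phi\nabla=\nabla\Phi$ on both sides by $\Phi^{-1}$. Define
\begin{align*}
h:\Omega^i(M,E\otimes L^n)\oplus\Omega^{i-1}(M,E\otimes L^n)\to\Omega^{i-1}(M,E\otimes L^n)\oplus\Omega^{i-2}(M,E\otimes L^n),\quad h(\alpha,\beta)=(\Phi^{-1}\beta,0).
\end{align*}
A direct computation gives
\begin{align*}
\mathbb{A}h(\alpha,\beta)=(\Phi^{-1}\nabla\beta,\beta),\qquad h\mathbb{A}(\alpha,\beta)=(\alpha-\Phi^{-1}\nabla\beta,0).
\end{align*}
Hence $\mathbb{A}h+h\mathbb{A}=\mathrm{id}$. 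Every cocycle is therefore a coboundary, and the cohomology vanishes in all degrees $0\leqslant i\leqslant\dim M+1$.

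It remains to show that $\Phi$ is invertible for all large $n$. Since $\Phi^{E\otimes L^n}$ is the derivation induced by $\Phi^E$ and $\Phi^L$, and $\edmp(L)$ is trivial so that $\Phi^L$ is a smooth function $f$, we have
\begin{align*}
\Phi=\Phi^E\otimes 1+n f\cdot\mathrm{id}.
\end{align*}
In fact (\ref{the second flatness}) for $L$ reads $\nabla^L(fs)=f\nabla^L s$, which forces $df=0$, so $f$ is locally constant. Compactness of $M$ does not even need this. Because $f$ is nowhere zero and $M$ is compact, there is $c>0$ with $|f|\geqslant c$. Fix any fibre metric on $E$, and let $C=\sup_M\|\Phi^E\|<\infty$. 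For $n>C/c$ we can write
\begin{align*}
\Phi=nf\left(\mathrm{id}+(nf)^{-1}\Phi^E\otimes1\right),
\end{align*}
where the second factor has operator norm of $(nf)^{-1}\Phi^E\otimes 1$ below $1$ at every point. The Neumann series then shows $\Phi$ is invertible pointwise. Its inverse is smooth, since inversion is smooth on invertible endomorphisms; alternatively one can use the adjugate formula over the determinant, which is nowhere zero.

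The first step then applies and yields Theorem \ref{main result}. The only points needing care are the sign conventions in the homotopy identity and confirming the exact form of $\Phi^{E\otimes L^n}$ from Proposition \ref{prerequisite}. I expect the verification $\mathbb{A}h+h\mathbb{A}=\mathrm{id}$, which uses the commutation $\Phi^{-1}\nabla=\nabla\Phi^{-1}$, to be the main step. The invertibility estimate is routine compactness.
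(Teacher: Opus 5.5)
Your proposal is correct and follows the paper's route. The contracting homotopy $h(\alpha,\beta)=(\Phi^{-1}\beta,0)$ is a self-contained version of the Tseng--Zhou vanishing the paper cites (it is the same computation as the paper's closing remark $\mathbb{A}^E((\Phi^E)^{-1}\beta,0)=(\alpha,\beta)$), and your fibre-metric/Neumann-series bound for $\Phi^E\otimes 1+nf\,\mathrm{id}$ is a frame-free form of the paper's local determinant estimate with $|f|\geqslant C_1$, $|f_{ij}|\leqslant C_2$ and a finite cover.

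Your side remark that (\ref{the second flatness}) forces $df=0$ is right and worth pushing one step further. The curvature of any connection on a real line bundle is exact, so (\ref{the first flatness}) makes $f\omega$ exact. Since $\omega$ cannot be exact on a closed symplectic manifold of positive dimension, this forces $f\equiv 0$, i.e.\ the hypothesis of the theorem is never actually satisfied there.
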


\begin{remark}\normalfont
    We can replace $\omega$ by any closed $2$-form $\eta$ and prove Theorem \ref{main result} under the cone flatness with respect to $\eta$ \cite[(1.11)]{tseng_and_zhou_2025mapping_yang_mills}. The details are at the end of the paper. 
\end{remark}

We clarify the details of (\ref{tensor product cohomology}) in Section \ref{section of product of symplectically flat connections} and prove Theorem \ref{main result} in Section \ref{proof section}. 

\section*{Acknowledgments}
I thank \professor Xiaobo Liu, \professor Xiang Tang, \professor Li-Sheng Tseng, \professor Shanwen Wang, \doctor Jinxuan Chen, \doctor Shitan Xu, and \doctor Zheng Xu for helpful discussions. Also, I thank Beijing International Center for Mathematical Research for providing an excellent working environment.

\section{Verifying the symplectic flatness}\label{section of product of symplectically flat connections}
Before we prove Theorem \ref{main result}, we clarify the construction of (\ref{tensor product cohomology}) and show the symplectic flatness. 

Let $(E, \nabla^E, \Phi^E)$ and $(F, \nabla^F, \Phi^F)$ be two symplectically flat vector bundles on $M$.  We have the derivation $\Phi^{E\otimes F}\in\Gamma(\edmp(E\otimes F))$ determined by 
\begin{align}
    \Phi^{E\otimes F}(a\otimes b) = \Phi^{E}(a)\otimes b + a\otimes\Phi^{F}(b)
\end{align}
for all $a\in\Gamma(E)$ and $b\in\Gamma(F)$. Also, we have the tensor product connection \cite[Section 1.1]{bgv} given by 
\begin{align}
    \nabla^{E\otimes F}_X(a\otimes b) = (\nabla^{E}_Xa)\otimes b + a\otimes (\nabla^{F}_Xb)
\end{align}
for all $a\in\Gamma(E)$, $b\in\Gamma(F)$, and $X\in\Gamma(TM)$. 

\begin{remark}
\normalfont
    The derivation $\Phi^{E\otimes F}$ is actually $\Phi^E\otimes\text{id}_F+\text{id}_E\otimes\Phi^F$. 
\end{remark}
\begin{proposition}\label{prerequisite}
    The mapping cone covariant derivative 
    \begin{align}
    \mathbb{A}^{E\otimes F}: \Omega^i(M, E\otimes F)\oplus\Omega^{i-1}(M, E\otimes F)&\to \Omega^{i+1}(M, E\otimes F)\oplus\Omega^{i}(M, E\otimes F)   \nonumber \\
       (\alpha, \beta) &\mapsto (\nabla^{E\otimes F}\alpha+\omega\wedge\beta, \Phi^{E\otimes F}\alpha -\nabla^{E\otimes F}\beta)
    \end{align}
    is symplectically flat and thus defines a cochain complex $(0\leqslant i\leqslant \dim M + 1)$. 
\end{proposition}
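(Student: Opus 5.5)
The plan is to check directly that $(E\otimes F, \nabla^{E\otimes F}, \Phi^{E\otimes F})$ satisfies the two conditions (\ref{the first flatness}) and (\ref{the second flatness}). Once that holds, \cite[Theorem 4.11]{tseng_and_zhou_symplectic_flat_connection_and_twisted_primitive2022} gives $(\mathbb{A}^{E\otimes F})^2=0$, so the maps form a cochain complex. Since both conditions are $C^\infty(M)$-linear identities of operators, once the extension of $\nabla^{E\otimes F}$ to forms is fixed, it suffices to test them on decomposable sections $a\otimes b$ with $a\in\Gamma(E)$, $b\in\Gamma(F)$. We extend $\nabla^{E\otimes F}$ to $\Omega^\bullet(M,E\otimes F)$ by the usual graded Leibniz rule.

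\textbf{First condition.} We use the standard fact for tensor product connections that the curvature is
\begin{align*}
(\nabla^{E\otimes F})^2 = (\nabla^E)^2\otimes\mathrm{id}_F + \mathrm{id}_E\otimes(\nabla^F)^2 .
\end{align*}
One way to see this is to expand $\nabla^{E\otimes F}\nabla^{E\otimes F}(a\otimes b)$. The two cross terms $\nabla^E a\wedge\nabla^F b$ appear with opposite signs, because moving the $1$-form $\nabla^E a$ past $\nabla^{F}$ introduces a sign. They therefore cancel, leaving $(\nabla^E)^2a\otimes b + a\otimes(\nabla^F)^2 b$. Substituting (\ref{the first flatness}) for $E$ and for $F$ gives
\begin{align*}
(\nabla^{E\otimes F})^2 = -\omega\wedge(\Phi^E\otimes\mathrm{id}_F+\mathrm{id}_E\otimes\Phi^F) = -\omega\wedge\Phi^{E\otimes F},
\end{align*}
which is exactly (\ref{the first flatness}) for $E\otimes F$.

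\textbf{Second condition.} Compute both compositions on $a\otimes b$:
\begin{align*}
\nabla^{E\otimes F}\Phi^{E\otimes F}(a\otimes b) &= \nabla^E\Phi^E a\otimes b + \Phi^E a\otimes\nabla^F b + \nabla^E a\otimes\Phi^F b + a\otimes\nabla^F\Phi^F b,\\
\Phi^{E\otimes F}\nabla^{E\otimes F}(a\otimes b) &= \Phi^E\nabla^E a\otimes b + \nabla^E a\otimes\Phi^F b + \Phi^E a\otimes\nabla^F b + a\otimes\Phi^F\nabla^F b .
\end{align*}
The mixed terms agree. This uses that $\Phi^E$ and $\Phi^F$ are endomorphisms acting pointwise on the bundle factor, so they commute with the form parts. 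The remaining terms agree by (\ref{the second flatness}) for $E$ and for $F$. The identity then extends from $\Gamma(E\otimes F)$ to forms of all degrees. Indeed, $\Phi$ is $\Omega^\bullet(M)$-linear of degree $0$ and $\nabla$ satisfies the Leibniz rule, so $[\nabla^{E\otimes F},\Phi^{E\otimes F}]$ is $\Omega^\bullet(M)$-linear and vanishes once it vanishes on sections.

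\textbf{Main obstacle.} The only delicate point is bookkeeping: the sign conventions for forms with values in a tensor product, and the cancellation of cross terms in the curvature computation. I would handle this by writing $\nabla^{E\otimes F}=\nabla^E\otimes 1+1\otimes\nabla^F$ as graded operators on $\Omega^\bullet(M,E)\otimes_{\Omega^\bullet(M)}\Omega^\bullet(M,F)$. In that form the squares add and the anticommuting odd cross terms cancel. Everything else is formal, and the cochain complex statement follows from \cite[Theorem 4.11]{tseng_and_zhou_symplectic_flat_connection_and_twisted_primitive2022}.
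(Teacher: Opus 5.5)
Your proposal is correct and follows essentially the same route as the paper: both check the two flatness conditions directly on decomposable elements, with the cross terms $\nabla^E a\wedge\nabla^F b$ cancelling in the curvature and the second condition following termwise from (\ref{the second flatness}). The only cosmetic difference is that the paper computes on $\sigma\otimes(v\otimes w)$ with $\sigma\in\Omega^i(M)$ directly, whereas you check on sections $a\otimes b$ and extend to all form degrees by $\Omega^\bullet(M)$-linearity.
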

\begin{proof}
    We only need to verify for the local tensor product expressions of sections. Let $\sigma\in\Omega^i(M)$, $v\in\Gamma(E)$, $w\in\Gamma(F)$. Then, we find
    \begin{align}
         \nabla^{E\otimes F}(\sigma\otimes(v\otimes w))  
        = d\sigma\otimes(v\otimes w) + (-1)^i\sigma\wedge(\nabla^E v\otimes w + v\otimes \nabla^F w),
    \end{align}
    and then
    \begin{align}
        & (\nabla^{E\otimes F})^2(\sigma\otimes(v\otimes w))    \nonumber\\
        =\ & \nabla^{E\otimes F}\left(d\sigma\otimes(v\otimes w) + (-1)^i\sigma\wedge(\nabla^E v\otimes w) + (-1)^i\sigma\wedge(v\otimes \nabla^F w)\right)   \nonumber\\
        =\ & (-1)^{i+1} d\sigma\wedge(\nabla^E v\otimes w) + (-1)^{i+1} d\sigma\wedge(v\otimes\nabla^F w)   \nonumber\\
        & + (-1)^i d\sigma\wedge(\nabla^E v\otimes w) + (-1)^{i+i}\sigma\wedge((\nabla^E)^2 v\otimes w) + (-1)^{i+i+1}\sigma\wedge(\nabla^E v\wedge\nabla^F w) \nonumber\\
        & + (-1)^i d\sigma\wedge(v\otimes \nabla^F w) + (-1)^{i+i}\sigma\wedge(\nabla^E v\wedge \nabla^F w) + (-1)^{i+i}\sigma\wedge(v\otimes(\nabla^F)^2 w) \nonumber\\
        =\ & \sigma\wedge((\nabla^E)^2 v \otimes w) + \sigma\wedge(v \otimes (\nabla^F)^2 w)   \nonumber\\
        =\ & -(\sigma\wedge\omega)\otimes(\Phi^E v\otimes w + v\otimes\Phi^F w)   \ \ \ \  (\text{By (\ref{the first flatness})})\nonumber\\
        =\ & -\omega\wedge\Phi^{E\otimes F}(\sigma\otimes(v\otimes w)).
    \end{align}
    In addition, 
    \begin{align}
        & \Phi^{E\otimes F}\nabla^{E\otimes F}(\sigma\otimes(v\otimes w))    \nonumber\\
        =\ & d\sigma\otimes(\Phi^E v\otimes w + v\otimes \Phi^F w) + (-1)^i\sigma\wedge(\Phi^E\nabla^E v\otimes w + \nabla^E v\otimes\Phi^F w)    \nonumber\\
        & + (-1)^i\sigma\wedge(\Phi^E v\otimes \nabla^F w + v\otimes \Phi^F\nabla^F w)    \nonumber\\
        =\ & d\sigma\otimes(\Phi^E v\otimes w + v\otimes \Phi^F w) + (-1)^i\sigma\wedge(\nabla^E\Phi^E v\otimes w + \nabla^E v\otimes\Phi^F w)    \nonumber\\
        & + (-1)^i\sigma\wedge(\Phi^E v\otimes \nabla^F w + v\otimes \nabla^F\Phi^F w)     \ \ \ \  (\text{By (\ref{the second flatness})})\nonumber\\
        =\ & d\sigma\otimes(\Phi^E v\otimes w + v\otimes \Phi^F w) + (-1)^i\sigma\wedge(\nabla^E\Phi^E v\otimes w + \Phi^E v\otimes \nabla^F w)    \nonumber\\
        & + (-1)^i\sigma\wedge(\nabla^E v\otimes\Phi^F w + v\otimes \nabla^F\Phi^F w)     \nonumber\\
        =\ & \nabla^{E\otimes F}\Phi^{E\otimes F}(\sigma\otimes(v\otimes w)).
    \end{align}
    Therefore, $\mathbb{A}^{E\otimes F}$ is symplectically flat if both $\mathbb{A}^E$ and $\mathbb{A}^F$ are symplectically flat. 
\end{proof}
\begin{remark}
\normalfont
    For $a\in\Gamma(E)$, $b\in\Gamma(F)$, and $\gamma, \delta\in\Omega^1(M)$, we define 
    \begin{align}\label{product of bundle-valued forms}
        (\gamma\otimes a)\wedge (\delta\otimes b)\coloneqq (\gamma\wedge\delta)\otimes(a\otimes b). 
    \end{align}
    This is a special case of \cite[Proposition 21.1]{loringtu2017differential} and makes sense of $\nabla^E v\wedge\nabla^F w$. If $E$ and $F$ are both $\mathbb{Z}_2$-graded superbundles (See \cite[Sections 1.3-1.4]{bgv} and \cite[Section 1.2]{weipingzhangnewedition}), we need the factor $(-1)^{\deg(\delta)\deg(a)}$ on the right hand side of (\ref{product of bundle-valued forms}). 
\end{remark}

\section{Proof of the main result}\label{proof section}
Now, we prove Theorem \ref{main result}. By Proposition \ref{prerequisite} and the vanishing property given in \cite[Theorem 4.2, Corollary 4.13]{tseng_and_zhou_symplectic_flat_connection_and_twisted_primitive2022}, we only need to show that $\Phi^{E\otimes L^n}$ is invertible when $n$ is sufficiently large. The main idea is in spirit very similar to the last step in the proof of Serre's vanishing theorem \cite[Proposition 5.2.7]{huybrechts2005complex}. 

Let $r$ be the rank of $E$. Then, we let $U$ and $V$ be two open subsets of $M$ satisfying $\overline{U}\subseteq V$,  
\begin{align}
    e_1, e_2, \cdots, e_r
\end{align}
be a frame of $E|_V$, and $v$ be a frame of $L|_V$. Then, we have smooth functions $f_{ij}$ ($1\leqslant i\leqslant r,\ 1\leqslant j\leqslant r$) and $f$ on $V$ such that 
\begin{align}\label{first matrix}
   \begin{bmatrix} \Phi^E(e_1), \cdots, \Phi^E(e_r) \end{bmatrix} = [e_1, \cdots, e_r]\begin{bmatrix}
       f_{11} & \cdots & f_{1r}\\
       \vdots &  & \vdots\\
       f_{r1} & \cdots & f_{rr}
   \end{bmatrix},
\end{align}
and 
\begin{align}\label{second matrix}
    \Phi^L(v) = f\cdot v.
\end{align}
For $1\leqslant i\leqslant r$, we let $w_i = e_i\otimes\underbrace{v\otimes\cdots\otimes v}_{n \text{\ copies}}$. Then, we have the frame
\begin{align}
    w_1, w_2, \cdots, w_r
\end{align}
of $E\otimes L^n$ on $V$. By (\ref{first matrix}) and (\ref{second matrix}), we have
\begin{align}
   & \begin{bmatrix} \Phi^{E\otimes L^n}(w_1), \Phi^{E\otimes L^n}(w_2), \cdots, \Phi^{E\otimes L^n}(w_r)\end{bmatrix}  \nonumber \\
   =\ & \begin{bmatrix}
       w_1, w_2, \cdots, w_r
   \end{bmatrix}\begin{bmatrix}
       f_{11} + n\cdot f & f_{12} & \cdots & f_{1r}\\
       f_{21} & f_{22} + n\cdot f & \cdots & f_{2r}\\
       \vdots & \vdots & \cdots & \vdots\\
       f_{r1} & f_{r2} & \cdots & f_{rr} + n\cdot f
   \end{bmatrix}. 
\end{align}
Since $\Phi^L$ is everywhere non-zero, and $M$ is compact, we have constants $C_1>0$ and $C_2>0$ such that on the compact subset $\overline{U}$ of $M$, 
\begin{align}
    |f|\geqslant C_1\text{\ and\ } |f_{ij}|\leqslant C_2\  (1\leqslant i\leqslant r,\ 1\leqslant j\leqslant r).
\end{align}
By the formula of determinant \cite[(4.16)]{greub1981linear}, for all sufficiently large $n$, the matrix
\begin{align}
    \begin{bmatrix}
       f_{11} + n\cdot f & f_{12} & \cdots & f_{1r}\\
       f_{21} & f_{22} + n\cdot f & \cdots & f_{2r}\\
       \vdots & \vdots & \cdots & \vdots\\
       f_{r1} & f_{r2} & \cdots & f_{rr} + n\cdot f
   \end{bmatrix}
\end{align}
is invertible on $\overline{U}$. The invertibility of $\Phi^{E\otimes L^n}$ is guaranteed by \cite[Lemma 2.3]{milnor1974characteristic} and the finite open cover consisting of such $U$. Theorem \ref{main result} is proved. 

We end the paper by explaining that $\omega$ can be replaced by any closed $2$-form $\eta$ on $M$. Our explanation is adapted from the proof of \cite[Theorem 4.2]{tseng_and_zhou_symplectic_flat_connection_and_twisted_primitive2022}. Given the cochain complex
\begin{align}\label{chain complex eta}
    \mathbb{A}^E: \Omega^i(M,E)\oplus\Omega^{i-1}(M,E)&\to \Omega^{i+1}(M,E)\oplus\Omega^{i}(M,E)   \nonumber \\
       (\alpha, \beta) &\mapsto (\nabla^E\alpha+\eta\wedge\beta, \Phi^E\alpha -\nabla^E\beta),
\end{align}
we assume that $\Phi^E$ is invertible, and $\nabla^E$ satisfies the cone flat conditions
\begin{align}
    (\nabla^E)^2 + \eta\wedge\Phi^E =\ & 0, \label{line 1}\\
    \Phi^E\nabla^E =\ & \nabla^E\Phi^E. \label{line 2}
\end{align}
Now, if $(\alpha, \beta)\in\Omega^i(M,E)\oplus\Omega^{i-1}(M,E)$ satisfies 
\begin{align}\label{line 3}
    (\nabla^E\alpha+\eta\wedge\beta, \Phi^E\alpha -\nabla^E\beta) = (0, 0), 
\end{align}
then by (\ref{line 1}), (\ref{line 2}), and (\ref{line 3}), we have
\begin{align}
    \mathbb{A}^{E}\left((\Phi^E)^{-1}\beta, 0\right) = (\alpha, \beta). 
\end{align}
Thus, when $\nabla^E$ is cone flat (with respect to $\eta$), and $\Phi^E$ is invertible, we see that the cohomology of (\ref{chain complex eta}) vanishes. Therefore, we can replace $\omega$ by $\eta$ in Theorem \ref{main result}. 

\bibliographystyle{abbrv}
\bibliography{mybib.bib}
\end{document}